\newcommand{\GG}{{\cal G}}
\newtheorem{theorem}{Theorem}
\newtheorem{corollary}[theorem]{Corollary}
\newtheorem{lemma}[theorem]{Lemma}
\newtheorem{observation}[theorem]{Observation}
\newcommand{\eps}{\varepsilon}
\newcommand{\mc}[1]{\mathcal{#1}}
\newcommand{\brm}[1]{\operatorname{#1}}
\newcommand{\bb}[1]{\mathbb{#1}}
\title{Treewidth of grid subsets}
\author{Eli Berger\thanks{University of Haifa, Haifa, Israel. Supported by BSF grant no. 2006099 and by ISF grant no. 1581/12.}\and Zden\v{e}k Dvo\v{r}\'ak\thanks{Charles University, Prague, Czech Republic.
E-mail: {\tt rakdver@iuuk.mff.cuni.cz}.  Supported by the Center of Excellence -- Inst. for Theor. Comp. Sci., Prague, project P202/12/G061 of Czech Science Foundation
and by the project LL1201 (Complex Structures: Regularities in Combinatorics and Discrete Mathematics) of the Ministry of Education of Czech Republic.}\\
\and Sergey Norin\thanks{Department of Mathematics and Statistics, McGill University. Email: {\tt snorin@math.mcgill.ca}. Supported by an NSERC grant 418520.}}
\date{}
\begin{document}
\maketitle

\begin{abstract}
Let $Q_n$ be the graph of $n\times n\times n$ cube with all non-decreasing diagonals (including the facial ones) in its constituent $1\times1\times1$ cubes.
Suppose that a set $S\subseteq V(Q_n)$ separates the left side of the cube from the right side.  We show that $S$ induces
a subgraph of tree-width at least $\frac{n}{\sqrt{18}}-1$.  We use a generalization of this claim to prove that the vertex set of $Q_n$ cannot be partitioned
to two parts, each of them inducing a subgraph of bounded tree-width.
\end{abstract}

Let $G_n$ be the plane triangulated $n\times n$ grid, and consider any (non-proper) coloring of vertices of $G_n$ by two colors.
A well-known HEX lemma implies that $G_n$ contains a monochromatic path with at least $n$ vertices.
We consider a $3$-dimensional analogue of this claim.

Let $Q_n$ be the $n\times n\times n$ grid with all non-decreasing diagonals in its constituent unit cubes;
more precisely, the vertex set of $Q_n$ is $\{(x,y,z)\in\bb{Z}^3:0\le x,y,z\le n-1\}$ and two distinct vertices
$(x,y,z)$ and $(x',y',z')$ are adjacent if $x\le x'\le x+1$, $y\le y'\le y+1$ and $z\le z'\le z+1$.
A result of Matou\v{s}ek and P\v{r}\'{\i}v\v{e}tiv\'{y}~\cite{matopriv} applied in this special case shows that any coloring of vertices of $Q_n$ by two colors 
contains a connected monochromatic subgraph with
$\Omega(n^2)$ vertices.  We aim to show that it actually contains a large monochromatic subgraph which is ``2-dimensional'' in nature,
i.e., with a large grid minor.  It is well-known that a graph contains a large grid as a minor if and only if it has a large tree-width~\cite{twchu1,twchu2,RSey,quickly},
and thus we can state our main result in the following equivalent form.

\begin{theorem}\label{thm-main2}
For every $t\ge 0$, there exists $n\ge 1$ such that for any partition $A_1,A_2$ of the vertex set of $Q_n$,
either $A_1$ or $A_2$ induces a subgraph of $Q_n$ of tree-width at least $t$.
\end{theorem}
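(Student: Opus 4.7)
The plan is to deduce Theorem~\ref{thm-main2} from the separator treewidth bound stated in the abstract---namely that any left-to-right separator $S$ of $Q_n$ satisfies $\Tw(Q_n[S])\ge\tfrac{n}{\sqrt{18}}-1$---together with standard properties of treewidth.

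I would first choose $n\ge\sqrt{18}(2t+1)$, so that $\tfrac{n}{\sqrt{18}}-1-t\ge t$. Given a partition $A_1,A_2$ of $V(Q_n)$, it suffices by symmetry to assume $\Tw(Q_n[A_1])<t$ and prove $\Tw(Q_n[A_2])\ge t$. Let $L$ and $R$ denote the left and right faces of the cube.

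The key tool is a standard consequence of the treewidth/bramble duality: in a graph of treewidth strictly less than $t$, any two vertex sets with no edge between them admit a vertex cut of size at most $t$. For $n\ge 3$ the sets $L\cap A_1$ and $R\cap A_1$ have no edge between them in $Q_n$ (they lie at coordinate distance $n-1\ge 2$), so this fact applied to $Q_n[A_1]$ yields a set $X\subseteq A_1$ with $|X|\le t$ that separates $L\cap A_1$ from $R\cap A_1$ in $Q_n[A_1]$. I claim that $X\cup A_2$ is then an $L$-to-$R$ separator in $Q_n$: any $L$-to-$R$ path in $Q_n$ either contains a vertex of $A_2$, or lies entirely in $A_1$ and is therefore an $L\cap A_1$-to-$R\cap A_1$ path in $Q_n[A_1]$, hence meets $X$. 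The main separator lemma applied to $X\cup A_2$ gives $\Tw(Q_n[X\cup A_2])\ge\tfrac{n}{\sqrt{18}}-1$, and since deleting the $|X|\le t$ vertices of $X$ drops treewidth by at most $|X|$, we conclude $\Tw(Q_n[A_2])\ge\tfrac{n}{\sqrt{18}}-1-t\ge t$.

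The main obstacle is the proof of the separator lemma itself, which I expect to occupy the bulk of the paper; the reduction above uses the lemma only as a black box and requires only elementary treewidth facts. The ``generalization of this claim'' alluded to in the abstract is presumably either a formalisation of the reduction above (allowing the separator fed to the lemma to carry a small slack set $X$) or a mildly stronger variant that streamlines the deduction; in either case, the essential quantitative ingredient is the linear-in-$n$ lower bound on the treewidth of a left--right separator.
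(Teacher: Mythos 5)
The reduction you propose hinges entirely on the claim that ``in a graph of treewidth strictly less than $t$, any two vertex sets with no edge between them admit a vertex cut of size at most $t$.'' This is false for arbitrary vertex sets, and the failure is exactly the obstacle the paper has to work around. Take the path $P=v_0v_1\cdots v_{4m}$, which has treewidth $1$, and let $A=\{v_0,v_4,v_8,\ldots,v_{4m}\}$ and $B=\{v_2,v_6,\ldots,v_{4m-2}\}$. No vertex of $A$ is adjacent to a vertex of $B$, yet any set separating $A$ from $B$ must contain at least one internal vertex of each of the $2m$ subpaths joining consecutive elements of $A\cup B$, so the minimum separator has size $2m$, not $\le t$. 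The true statement behind treewidth--separator duality concerns separating two \emph{connected} subgraphs whose subtrees in a tree decomposition are disjoint, or a single vertex from a single vertex, or gives balanced separators with respect to a weight function (as in Lemma~\ref{lem:weights}). In your setting, $L\cap A_1$ and $R\cap A_1$ may each be scattered across thousands of components of $Q_n[A_1]$, and then nothing forces a small separator even when $\brm{tw}(Q_n[A_1])$ is tiny.

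This is not a cosmetic issue: the paper's actual proof of Theorem~\ref{thm-main2} is an induction on a parameter $b$ producing $(b,i)$-blocked staircases, with the grid size $N(b)$ growing multiplicatively at each step, ultimately giving $n$ that is at best exponential in $t$. If your reduction were valid, it would yield $n=O(t)$, which is far stronger than what the authors prove and would make most of Section~\ref{sec-main2} (Lemmas \ref{lemma-connect}, \ref{lemma-permpair}, Observation~\ref{lemma-obsjoin}, and the bramble construction) unnecessary. The complication in the paper comes precisely from the fact that one cannot peel off the ``low-treewidth'' side with a single small cut; instead one must repeatedly switch colors, enlarge staircases, and apply the slab theorem (Theorem~\ref{thm-main1}, not just Theorem~\ref{thm-main1a}) to the enlargements, eventually assembling a bramble in one color class. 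To salvage your approach you would need to replace the false separator claim by an argument that exploits the specific structure of $Q_n[A_1]$, which is essentially what the paper's induction does.
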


Recall that a \emph{tree decomposition} $(T,\beta)$ of a graph $G$ is a tree $T$ and a function $\beta:V(T)\to 2^{V(G)}$ assigning a \emph{bag} $\beta(u)$ to
each vertex $u\in V(T)$, such that every vertex of $G$, as well as both ends of every edge of $G$, are contained in some bag,
and such that $\{u:v\in\beta(u)\}$ induces a connected subtree of $T$ for every $v\in V(G)$.  The \emph{width} of the decomposition is
the maximum of the sizes of its bags minus one, and the \emph{tree-width} $\brm{tw}(G)$ of $G$ is the minimum of the widths of its tree decompositions.

Let us remark that the presence of diagonals is important for the validity of Theorem~\ref{thm-main2}; if the diagonals of $Q_n$
are omitted, the graph becomes bipartite, and thus it can be partitioned to two independent sets (of tree-width $0$).

Theorem~\ref{thm-main2} is motivated by a notion from the algorithmic graph theory.
We say that a class $\GG$ of graphs is \emph{tree-width fragile}
if for every $k\ge 1$, there exists $t_k\ge 0$ such that every graph $G\in\GG$,
there exist pairwise disjoint sets $A_1,\ldots,A_k\subseteq V(G)$ satisfying $\brm{tw}(G-A_i)\le t_k$ for $1\le i\le k$.
For example, planar graphs are known to have this property~\cite{baker1994approximation,rs3}.
Many interesting graph problems have efficient algorithms when restricted to graphs with bounded tree-width,
enabling Baker~\cite{baker1994approximation} to exploit this property in design of approximation algorithms for planar graphs.

It is natural to ask which more general graph classes are tree-width fragile, as the same approximation algorithms can be used for such graph classes.
Eppstein~\cite{eppstein00} proved that this is the
case for graphs avoiding some apex graph as a minor (a graph $H$ is \emph{apex} if $H-v$ is planar for some $v\in V(H)$),
and in particular for graphs embedded in any fixed surface. DeVos et al.~\cite{devospart} generalized the argument to all proper
minor-closed classes of graphs. 

Of course, not all graph classes are tree-width fragile.  It is easy to see that any graph class with
this property must be sparse and must have sublinear separators~\cite{twd}.
Theorem~\ref{thm-main2} gives another class of obstructions.
\begin{corollary}
The graph class $\{Q_n:n\ge 1\}$ is not tree-width fragile.
\end{corollary}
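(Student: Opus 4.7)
The plan is to derive the corollary from Theorem~\ref{thm-main2} by contradiction, taking $k=2$ in the definition of tree-width fragility. Assuming $\{Q_n:n\ge 1\}$ were tree-width fragile, we would obtain a uniform bound $t_2$ such that every $Q_n$ admits two disjoint vertex sets $A_1,A_2$ with $\brm{tw}(Q_n-A_i)\le t_2$ for $i=1,2$. I would then invoke Theorem~\ref{thm-main2} with threshold $t_2+1$ to fix a particular $n$ for which every partition of $V(Q_n)$ into two parts has one part inducing a subgraph of tree-width at least $t_2+1$, and aim for a contradiction with the existence of such $A_1,A_2$.

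The key step is to turn the disjoint-but-possibly-noncovering pair $(A_1,A_2)$ into a genuine partition without increasing tree-width. The natural choice is $B_1\colonequals A_1$ and $B_2\colonequals V(Q_n)\setminus A_1$. By construction, $Q_n[B_2]=Q_n-A_1$, so $\brm{tw}(Q_n[B_2])\le t_2$ directly. For $B_1$, I would use that $A_1\subseteq V(Q_n)\setminus A_2$ since $A_1$ and $A_2$ are disjoint, hence $Q_n[B_1]=Q_n[A_1]$ is a subgraph of $Q_n-A_2$; since tree-width is monotone under taking subgraphs, $\brm{tw}(Q_n[B_1])\le \brm{tw}(Q_n-A_2)\le t_2$. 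Thus $(B_1,B_2)$ is a partition of $V(Q_n)$ in which both parts induce subgraphs of tree-width at most $t_2$, contradicting the choice of $n$ from Theorem~\ref{thm-main2}.

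There is no real technical obstacle here beyond bookkeeping: the corollary is essentially a contrapositive reformulation of Theorem~\ref{thm-main2} once one observes that the $k=2$ case of tree-width fragility forces the existence of a two-part partition in which both induced subgraphs have bounded tree-width. The only subtlety worth stating explicitly is the subgraph-monotonicity of tree-width used to pass from $Q_n-A_2$ to $Q_n[A_1]$; everything else is a direct translation of definitions.
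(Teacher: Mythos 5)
Your proof is correct and follows essentially the same route as the paper, which simply notes that Theorem~\ref{thm-main2} already refutes tree-width fragility at $k=2$. You spell out the minor bookkeeping step (converting the disjoint pair $A_1,A_2$ into a genuine partition $B_1,B_2$ and invoking subgraph-monotonicity of tree-width) that the paper leaves implicit.
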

Indeed, Theorem~\ref{thm-main2} shows that the condition of tree-width fragility fails already for $k=2$;
unlike the previously known graph classes that are not tree-width fragile, the graphs $Q_n$ have
bounded maximum degree and balanced separators of order $O\bigl(|V(Q_n)|^{2/3}\bigr)$.
Interestingly, the class $\{Q_n:n\ge 1\}$ is \emph{fractionally} tree-width fragile, where the fractional version of
tree-width fragility is defined in the standard way~\cite{twd}; it is the first known class of graphs showing that
tree-width fragility and fractional tree-width fragility do not coincide.

Another motivation for Theorem~\ref{thm-main2} comes from graph coloring theory.  Many results are known on
the variants of the coloring where the color classes are not required to be independent sets, but rather satisfy some other
constraints, such as inducing subgraphs of bounded maximum degree~\cite{cowen1997defective,edwards2014relative}, with bounded component size~\cite{alon2003partitioning,liusmall},
or, as in our case, bounded tree-width~\cite{devospart,ding2000surfaces}.  For this notion of \emph{low tree-width coloring}, the previous results
mostly focus on positive results, showing that graphs from some class have a low tree-width coloring using a constant number of colors.
On the other hand, Theorem~\ref{thm-main2} gives a lower bound, presenting an example of a natural class of graphs that do not have low tree-width coloring
by two colors.

\bigskip

Let us now give a brief idea of the proof of Theorem~\ref{thm-main2}.  Recall that every graph containing a $t\times t$ grid
as a minor has tree-width at least $t$.  Hence, we aim to construct a monochromatic grid in $Q_n$ by connecting
appropriately chosen connected subgraphs by disjoint paths.  Of course, we need to deal with the situation that such paths are blocked
by the vertices of the other color.  Such blocking subgraphs must be in a sense larger than the subgraphs we are trying to join.
We now switch to the other color class, considering these blocking subgraphs to be the nodes of a grid we are constructing
and trying to find paths between these nodes.
We may switch back and forth between the color classes several times, repeatedly enlarging the node subgraphs.
For this procedure to end, we need to argue that eventually, the node subgraphs cannot be separated by a set inducing a subgraph
of small tree-width.  Abstracting the problem further, we reach the following claim of independent
interest.

Let $X$ be a subset of vertices of $Q_n$ that separates
the left side of the grid from the right side.  Supposing that $X$ is minimal with this property, a geometric intuition
tells us that $X$ should correspond to a surface in the $3$-dimensional Euclidean space separating the left side of $Q_n$
from the right side, and that such a surface should contain a subdivision of a large $2$-dimensional
grid in $Q_n$.  Although this geometric intuition is somewhat misleading and difficult to make precise, the overall conclusion
that $Q_n[X]$ should have large tree-width is true.

\begin{theorem}\label{thm-main1a}
Let $n\ge 1$ be an integer, let $S_1$ be the set of vertices of the left side of $Q_n$ and let $S_2$ be the set of vertices
of the right side of $Q_n$.  If a set $X\subseteq V(Q_n)\setminus (S_1\cup S_2)$ intersects every paths
in $G$ from $S_1$ to $S_2$, then the subgraph of $Q_n$ induced by $X$ has tree-width at least $\frac{n}{\sqrt{18}}-1$.
\end{theorem}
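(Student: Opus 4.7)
The plan is to construct a bramble of order $\lceil n/\sqrt{18}\rceil$ in $Q_n[X]$; by the standard bramble/tree-width duality this immediately yields $\brm{tw}(Q_n[X]) \ge n/\sqrt{18} - 1$. Set $k := \lceil n/\sqrt{18}\rceil$ and partition $\{0,1,\ldots,n-1\}$ into $k$ disjoint intervals $Y_1,\ldots,Y_k$ in the $y$-coordinate and $Z_1,\ldots,Z_k$ in the $z$-coordinate, each of length roughly $\sqrt{18}$.

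First I would build, for each $i$, a connected subgraph $R_i$ of $Q_n[X]$ sitting inside the $y$-slab $\brm{Slab}_i^y := Q_n[\{v : y(v)\in Y_i\}]$ and spanning the $z$-direction, meaning that it meets both $\{z=0\}$ and $\{z=n-1\}$. Any $S_1$--$S_2$ path contained in $\brm{Slab}_i^y$ is also an $S_1$--$S_2$ path in $Q_n$, so $X\cap \brm{Slab}_i^y$ separates the left and right faces of the slab; picking any $y_i^\ast\in Y_i$ and applying the $2$-dimensional HEX lemma inside the plane $y=y_i^\ast$ (where $X$ must still separate $\{x=0\}$ from $\{x=n-1\}$) yields a $z$-spanning connected subgraph of $X$ in that plane, which I take as $R_i$. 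Symmetrically I pick $z_j^\ast\in Z_j$ and take a connected $y$-spanning $C_j\subseteq X$ inside the plane $z=z_j^\ast$. The $R_i$ (respectively, the $C_j$) are pairwise disjoint since they live in distinct parallel planes.

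The candidate bramble is $\mc{B} := \{R_i\cup C_j : i,j\in\{1,\ldots,k\}\}$. Assuming that each pair $R_i,C_j$ touches in $Q_n[X]$ (shares a vertex or is joined by an edge), every $R_i\cup C_j$ is connected, and any two elements of $\mc{B}$ touch --- via a common $R$-piece if $i=i'$, a common $C$-piece if $j=j'$, or an $R_i$--$C_{j'}$ crossing otherwise. Any hitting set $H$ that meets every $R_i$ has $|H|\ge k$ by disjointness; if $H$ misses some $R_i$ then $H$ must meet every $C_j$ to hit $R_i\cup C_j$, so again $|H|\ge k$. Thus $\mc{B}$ has order at least $k$, which finishes the argument modulo the touching condition.

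The hard part will be guaranteeing that every $R_i$ and $C_j$ actually touch in $Q_n[X]$. As constructed, $R_i$ and $C_j$ lie in perpendicular planes meeting only along the line $\{(x,y_i^\ast,z_j^\ast) : 0\le x\le n-1\}$; each must visit this line (since $R_i$ spans $z$ and $C_j$ spans $y$), but the visits can in principle occur at arbitrary, far-apart $x$-coordinates, so a single HEX curve in each plane gives no control. My plan to fix this is to enlarge $R_i$ and $C_j$ to use the full thickness of their slabs, replacing each HEX curve by a maximal connected subgraph of $X\cap \brm{Slab}_i^y$ (respectively $X\cap \brm{Slab}_j^z$) containing it, and then to invoke an averaging argument that, within a slab of thickness on the order of $\sqrt{18}$, forces any two such enlarged spanning subgraphs to approach within graph-distance $1$ along some crossing line. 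The constant $\sqrt{18}$ is precisely the slab thickness that balances this crossing estimate against the need for $n/\sqrt{18}$ slabs in the bramble; calibrating this trade-off is where I expect the technical weight of the proof to lie.
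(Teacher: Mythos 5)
Your plan is genuinely different from what the paper does, and the difference is instructive, but the plan as it stands has a real gap at exactly the place you flagged.

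Where the paper goes: rather than trying to exhibit a bramble or grid minor inside $X$, the paper works by contradiction via a discrete homotopy argument. It defines an entire function $f\in C^0(Q_n,\mathcal{L})$ from a separation $(A,B)$ with $X=A\cap B$, assigns each vertex of $X$ a weight $\lambda(v)=\lambda_{P,f}(v)$ along the straight $x$-paths $G\uparrow(i,j)$, and shows $\lambda(X)=n^2$. Assuming $\brm{tw}(Q_n[X])\le t$ small, it invokes Lemma~\ref{lem:weights} to get a balanced separation $(K,L)$ of $Q_n[X]$ with $|K\cap L|\le t+1$, then introduces a second function $g$ that is $\star$ on $L$ and uses the homotopy lemma (Lemma~\ref{lem:homotopic}) to prove that the winding-type quantity $h(p)=\frac12\int_{G\uparrow p}dg$ is an integer constant on all rows and columns avoiding $K\cap L$, and changes by at most $\Delta\,|V(R_i)\cap(K\cap L)|$ elsewhere. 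Summing shows $\lambda(K\setminus L)$ is within $\Delta(t+1)^2$ of an integer multiple of $n^2$, contradicting balance unless $t\ge n/\sqrt{3\Delta}-1$. The constant $\sqrt{18}=\sqrt{3\cdot 6}$ falls out of the degree bound $\Delta=6$ in this counting; it is not a calibration parameter. Crucially, the argument never needs to locate any particular crossing between ``row-like'' and ``column-like'' pieces of $X$; the global integral formula plays that role.

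The gap in your plan: the touching of $R_i$ and $C_j$ is not a technical detail to be cleaned up by averaging, it is the whole theorem. Two connected sets, one spanning the $z$-direction in a $y$-slab and one spanning the $y$-direction in a $z$-slab, have no topological reason to meet: they both traverse the $n\times b\times b$ tube $\brm{Slab}_i^y\cap\brm{Slab}_j^z$, but one crosses it in the $z$-direction and the other in the $y$-direction, and each is free to do so at completely different $x$-levels. Neither the HEX lemma (which controls only the existence of a single spanning component in each $2$-D slice, not where it sits in $x$) nor thickening the slabs gives any mechanism that forces the two crossings to come within distance one: a sparse, folded separator $X$ can have its $z$-spanning component threading the tube at one end in $x$ and its $y$-spanning component at the other. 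You would need a genuine topological input tying the $x$-levels of $R_i$ and $C_j$ together across the tube, and that input is precisely what the homotopy machinery in the paper supplies in a global, non-constructive way. The phrase ``averaging argument'' here has no content as written, and I do not see how to give it content without in effect reproving the homotopy lemma; so the proposal, while it correctly identifies the right auxiliary objects ($z$-spanning and $y$-spanning connected slices of $X$) and the right certificate (a bramble), does not yet contain a proof.

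One smaller remark: the paper proves the quantitative statement for general $(n\times n)$-slabs with row/column degree $\le\Delta$ (Theorem~\ref{thm-main1}), which it then needs in the proof of Theorem~\ref{thm-main2}. Your slice-based construction is specific to the Euclidean structure of $Q_n$ and would not obviously transfer to that generality even if the touching issue were resolved.
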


Again, note that the presence of diagonals is necessary for the validity of Theorem~\ref{thm-main1a}.
To prove Theorem~\ref{thm-main2}, we need a generalization of Theorem~\ref{thm-main1a}.
A plane graph $H$ is a \emph{near-triangulation} if every face of $H$ except for the outer one has length three.
A triple $(G,S_1,S_2)$, where $G$ is a graph and $S_1$ and $S_2$ are vertex-disjoint connected subgraphs of $G$,
is an \emph{$(n\times n)$-slab (with sides $S_1$ and $S_2$)} if there exist pairwise vertex-disjoint near-triangulations $R_1, \ldots, R_n\subseteq G$
(the \emph{rows} of the slab) and pairwise vertex-disjoint near-triangulations $C_1, \ldots, C_n\subseteq G$ (the \emph{columns} of the slab)
such that for $1\le i,j\le n$, the intersection $R_i\cap C_j$ is a path with one end in $S_1$ and the other end in $S_2$,
and such that for $s\in\{1,2\}$ and for every row or column $H$, the intersection of $H$ with $S_s$ is a subpath of the boundary of the outer face of $H$.

\begin{theorem}\label{thm-main1}
Let $(G,S_1,S_2)$ be an $(n\times n)$-slab with rows and columns of maximum degree at most $\Delta\ge 3$
and let $X$ be a subset of $V(G)\setminus (V(S_1)\cup V(S_2))$.  If every path in $G$ from $S_1$ to $S_2$
intersects $X$, then $G[X]$ has tree-width at least $\frac{n}{\sqrt{3\Delta}}-1$.
\end{theorem}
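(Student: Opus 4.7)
The plan is to exhibit in $G[X]$ a bramble of order at least $\lceil n/\sqrt{3\Delta}\rceil$; by the duality between brambles and tree-width, this gives $\brm{tw}(G[X])\ge n/\sqrt{3\Delta}-1$. The bramble's row- and column-elements will come from applying a planar HEX-lemma argument inside each row $R_i$ and each column $C_j$ of the slab, with the degree bound $\Delta$ being used in the final matching step.

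First I would observe that, for each row $R_i$, the intersection $X\cap V(R_i)$ is a vertex separator in the plane near-triangulation $R_i$ between the arcs $L_i\colonequals S_1\cap V(R_i)$ and $L_i'\colonequals S_2\cap V(R_i)$, since any $L_i$--$L_i'$ path in $R_i$ is an $S_1$--$S_2$ path in $G$ and must meet $X$. The outer cycle of $R_i$ is divided by $L_i$ and $L_i'$ into four arcs, the other two of which I call $T_i$ and $B_i$. The 4-arc planar HEX lemma then produces a connected subgraph $Y_i\subseteq X\cap V(R_i)$ that crosses $R_i$ between $T_i$ and $B_i$. A Jordan-curve argument in the topological disk $R_i$ shows that $Y_i$ must share a vertex with the path $\pi_{ij}\colonequals R_i\cap C_j$ for every $j$, since $Y_i$ and $\pi_{ij}$ connect opposite pairs of arcs of the outer cycle. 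Performing the symmetric construction inside every column yields a connected $Z_j\subseteq X\cap V(C_j)$ meeting every $\pi_{ij}$.

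To assemble the bramble, I would select $k\colonequals\lceil n/\sqrt{3\Delta}\rceil$ evenly-spaced rows $R_{i_1},\dots,R_{i_k}$ and columns $C_{j_1},\dots,C_{j_k}$, spaced roughly $\sqrt{3\Delta}$ apart. For each pair $(a,b)$, both $Y_{i_a}$ and $Z_{j_b}$ hit $\pi_{i_a j_b}$, though perhaps at different vertices. A coordinated enlargement step then replaces each $Y_{i_a}$ and $Z_{j_b}$ by connected supergraphs $\widetilde Y_a\subseteq X\cap V(R_{i_a})$ and $\widetilde Z_b\subseteq X\cap V(C_{j_b})$ that share a vertex of $X$ on each $\pi_{i_a j_b}$. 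Because rows are pairwise vertex-disjoint (and similarly columns), the $\widetilde Y_a$'s and $\widetilde Z_b$'s remain pairwise disjoint within their own families, so the $2k$ subgraphs form a bramble in $G[X]$; any hitting set of this bramble contains at least $k$ vertices, which gives the desired lower bound on $\brm{tw}(G[X])$.

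The main obstacle is this coordinated enlargement step: one must merge the relevant connected components of $X\cap V(R_{i_a})$ (those meeting the chosen column-paths $\pi_{i_a j_b}$) into a single connected subgraph without trespassing into neighboring rows, and symmetrically for the columns. Here the hypothesis that each row and column has maximum degree at most $\Delta$ is essential, as it controls the ``reach'' of these components. The factor $\sqrt{3\Delta}$ in the theorem emerges from balancing the number $k$ of rows and columns we can afford to include (keeping enlargements from colliding between different rows or columns) against the density needed so that the row- and column-enlargements necessarily overlap on each selected $\pi_{i_a j_b}$. I expect the technical core to be a careful packing or Menger-flow argument along the $\pi_{ij}$'s, exploiting the degree bound and the chosen spacing to control the total number of $X$-vertices on each relevant intersection path.
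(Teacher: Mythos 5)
Your plan diverges completely from the paper's argument, which never builds a bramble in $G[X]$: instead, the paper sets up a discrete homotopy calculus on $\{0,\pm1,\star\}$-valued functions, defines a ``crossing'' invariant $h(p)=\tfrac12\int_{G\uparrow p}dg$, shows via a weighted separator lemma that if $\brm{tw}(G[X])$ were small then $h$ would be nearly constant across all $n^2$ row--column intersection paths, and derives a contradiction with the balance of the separator. The degree bound $\Delta$ enters there in a controlled, quantitative way (each non-$g$-contractible triangle is charged to an incidence with the separator $K\cap L$). That global, invariant-based argument sidesteps exactly the local merging problem your proposal runs into.

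The gap in your proposal is the ``coordinated enlargement'' step, and it is not a technicality you can expect to push through. After the HEX step, $Y_{i}$ meets $\pi_{ij}$ at some vertex $u_{ij}$ and $Z_{j}$ meets it at some $v_{ij}$, but these may lie in different connected components of $X\cap V(R_i)$ and of $X\cap V(C_j)$, with arbitrarily long stretches of $\pi_{ij}\setminus X$ between them; there is no reason a connected supergraph $\widetilde Y_a\subseteq X\cap V(R_{i_a})$ containing both $u$ and the right endpoint exists at all. You suggest the degree bound $\Delta$ ``controls the reach'' of these components, but $\Delta$ bounds local degree inside a row, not the connectivity pattern of $X$ across a row, and your write-up never produces a mechanism by which $\Delta$ (let alone the specific $\sqrt{3\Delta}$) constrains the failure of merging. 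In addition, the bramble as stated is incorrect even granting the enlargement: the $2k$ sets $\widetilde Y_1,\dots,\widetilde Y_k,\widetilde Z_1,\dots,\widetilde Z_k$ do not form a bramble, since $\widetilde Y_a$ and $\widetilde Y_{a'}$ lie in vertex-disjoint rows and need not touch; you would need to pass to the $k$ sets $\widetilde Y_a\cup \widetilde Z_a$, and then a single vertex in $\widetilde Y_a\cap\widetilde Z_b$ can hit two bramble elements, so the order argument loses a further factor. These are fixable bookkeeping issues, but the merging step is the essential missing idea, and I do not see how to supply it within your framework.
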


Clearly, Theorem~\ref{thm-main1} implies Theorem~\ref{thm-main1a}.
The proof of Theorem~\ref{thm-main1} is topological in nature and we give it in the following two sections.
Section~\ref{sec-main2} is devoted to the proof of Theorem~\ref{thm-main2}.

\section{ $\{0,\pm1, \star\}$-valued functions on graphs and homotopy}

In this section, we develop a discrete variant of the basic tools of homotopy theory.

Let $\mc{L}=\{0,\pm1, \star\}$. We will consider function $f: V(G)\to \mc{L}$
for a graph $G$. 
Recall that a \emph{separation} of $G$ is pair $(A,B)$ of subsets of vertices such that
$V(G)=A\cup B$ and $G$ has no edge with one end in $A\setminus B$ and the other end in $B\setminus A$.
Without explicitly assuming it, we will typically consider the
vertices with function values $-1$ and $1$ as corresponding to sets $A\setminus B$ and
$B\setminus A$, respectively, for some separation $(A,B)$ of $G$, and label $0$, as
roughly corresponding to a union of some components of $G[A \cap B]$. This
motivates the following definition. We say that $f:V(G) \to \mc{L}$ is
\emph{continuous} if vertices $u,v\in V(G)$ are not adjacent whenever $f(v)=1$
and $f(u)=-1$.  We say that $f:V(G) \to  \mc{L}$ is \emph{holomorphic} if $f$
is continuous and additionally vertices $u,v\in V(G)$ are not adjacent
whenever $f(v)=0$ and $f(u)=\star$. We say that $f:V(G) \to  \mc{L}$ is
\emph{entire}, if it is continuous and $\star \not \in \brm{Image}(f)$.
 
Let $C^0(G,\mc{L})$ denote the set of functions $f: V(G) \to \mc{L}$. Fix an
arbitrary orientation of $E(G)$, that is for every $e \in E(G)$ we distinguish
its beginning vertex denoted by $e^-$ and its end vertex denoted by $e^+$. Let
$C^1(G,\bb{Z})$ denote the lattice of all functions $f: E(G) \to \bb{Z}$. The
operator $d: C^0(G,\mc{L}) \to C^1(G,\bb{Z})$ is defined  by
$df(e)=f(e^+)-f(e^-)$ if $\star \not  \in \{f(e^+),f(e^-)\}$ and $df(e)=0$,
otherwise.

For a directed walk $W=(v_0,e_1,v_1,\ldots,e_n,v_n)$ on $G$, not necessarily respecting the orientation we fixed,
and for $1\le i\le n$, let $\eps_i\in C^1(G,\bb{Z})$ be defined by $\eps_i(e_i)=1$ if $v_i=e^+_i$, $\eps(e_i)=-1$ if
$v_i=e_i^-$ and $\eps_i(e)=0$ for $e\in E(G)\setminus \{e_i\}$.  Let $I_W=\sum_{i=1}^n\eps_i$.
For a walk $W$ and $h \in C^1(G,\bb{Z})$ we define $\int_W h \colonequals \sum_{e \in E(G)}I_W(e)h(e)$.  
Let $W^{-1}$ denote the reversal of $W$, that is the walk $(v_n,e_n,v_{n-1}, e_{n-1},\ldots, e_1,v_0)$.
For two walks $W_1$ from $u$ to $v$ and $W_2$ from $v$ to $w$, let $W_1W_2$ denote the concatenation of $W_1$ and $W_2$.
\begin{lemma}\label{lemma-entire}
If $W$ is a walk from $u$ to $v$ on a graph $G$ and $f\in C^0(G,\mc{L})$ is entire on $V(W)$ then
$$\int_W df = f(v)-f(u).$$
\end{lemma}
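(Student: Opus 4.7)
The plan is to reduce the claim to a telescoping sum along the walk, with the entirety hypothesis being used exactly to ensure that $df$ on each edge of $W$ equals the naive difference $f(e^+)-f(e^-)$ rather than the default value $0$.

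First I would unpack the definitions. Since $f$ is entire on $V(W)$, we have $f(w)\in\{0,\pm1\}$ for every $w\in V(W)$; in particular, for each edge $e_i$ of the walk, $\star\notin\{f(e_i^+),f(e_i^-)\}$, so the first branch of the definition of $d$ applies and $df(e_i)=f(e_i^+)-f(e_i^-)$. Next I would expand
\[
\int_W df \;=\; \sum_{e\in E(G)} I_W(e)\,df(e) \;=\; \sum_{i=1}^n \sum_{e\in E(G)} \eps_i(e)\,df(e).
\]
Because $\eps_i$ is supported on the single edge $e_i$, the inner sum collapses to $\eps_i(e_i)\,df(e_i)$, and the whole expression becomes $\sum_{i=1}^n \eps_i(e_i)\,df(e_i)$.

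Then I would carry out the sign bookkeeping. If the walk traverses $e_i$ in the direction of the fixed orientation, i.e.\ $v_i=e_i^+$ and $v_{i-1}=e_i^-$, then $\eps_i(e_i)=1$ and $\eps_i(e_i)\,df(e_i)=f(e_i^+)-f(e_i^-)=f(v_i)-f(v_{i-1})$. If instead $v_i=e_i^-$ and $v_{i-1}=e_i^+$, then $\eps_i(e_i)=-1$ and $\eps_i(e_i)\,df(e_i)=-(f(e_i^+)-f(e_i^-))=f(v_i)-f(v_{i-1})$. Either way the $i$-th contribution is $f(v_i)-f(v_{i-1})$, so
\[
\int_W df \;=\; \sum_{i=1}^n \bigl(f(v_i)-f(v_{i-1})\bigr) \;=\; f(v_n)-f(v_0) \;=\; f(v)-f(u).
\]

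The argument is essentially a discrete fundamental theorem of calculus, so there is no serious obstacle; the only point that requires care is the sign convention relating the arbitrary orientation of $E(G)$ (used in the definition of $d$) to the walk's own direction of traversal (used in $\eps_i$). The entirety hypothesis enters in exactly one place, namely to guarantee that no edge of $W$ falls into the exceptional branch $df(e)=0$ of the definition; without it, the telescoping would fail at edges incident with a $\star$-vertex.
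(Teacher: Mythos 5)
Your proof is correct and follows essentially the same route as the paper's: expand $\int_W df$ via $I_W=\sum_i\eps_i$, collapse to $\sum_i \eps_i(e_i)\,df(e_i)$, and telescope. You spell out the sign bookkeeping and the role of the entirety hypothesis more explicitly than the paper does, but the argument is the same.
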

\begin{proof}
Let $W=(v_0,e_1,v_1,\ldots,e_n,v_n)$, where $v_0=u$ and $v_n=v$, and for $1\le i\le n$, let $\eps_i$ be as in the definition of $I_W$.
We have
\begin{align*}
\int_W df&=\sum_{e \in E(G)} I_W(e)df(e)=\sum_{i=1}^n \eps_i(e_i)df(e_i)\\
&=\sum_{i=1}^n f(v_i)-f(v_{i-1})=f(v_n)-f(v_0)=f(v)-f(u).
\end{align*}
\end{proof}
 
We refer to closed walks of length three on a graph $G$ as \emph{triangles}.
We say that a triangle $T$ is \emph{$f$-contractible} for $f \in C^0(G,\mc{L})$, if $f$ is holomorphic on $V(W)$.
The next observation is the first step towards obtaining an extension of Lemma~\ref{lemma-entire} to non-entire functions.

\begin{lemma}\label{lem:triangle}
Let $T$ be a triangle in $G$ and let $f \in C^0(G,\mc{L})$. If $f$ is continuous then $|\int_T df|\leq 1$, and if $T$ is $f$-contractible then $\int_T df=0$.
\end{lemma}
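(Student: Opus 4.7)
The plan is to write $T=(v_0,e_1,v_1,e_2,v_2,e_3,v_0)$ and compute $\int_T df$ term by term, then do a short case analysis based on how many of $f(v_0),f(v_1),f(v_2)$ equal $\star$. The key bookkeeping observation, which I would prove first, is that for each $i$ the contribution $\eps_i(e_i)\,df(e_i)$ equals $f(v_i)-f(v_{i-1})$ when neither endpoint maps to $\star$, and equals $0$ otherwise. This is immediate from unwinding the definitions of $\eps_i$ and $d$: in either orientation of $e_i$ the sign $\eps_i(e_i)$ times $df(e_i)=f(e_i^+)-f(e_i^-)$ collapses to $f(v_i)-f(v_{i-1})$, provided $df(e_i)$ is not forced to $0$ by a $\star$.

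For the continuous case I would split on $k=|\{i\in\{0,1,2\}:f(v_i)=\star\}|$. If $k=0$ every term is a genuine difference and the three terms telescope to $f(v_0)-f(v_0)=0$. If $k=1$, say $f(v_0)=\star$, then the two edges incident to $v_0$ contribute $0$, and the single remaining contribution is $f(v_2)-f(v_1)$; since $v_1v_2$ is an edge and $f$ is continuous, we cannot have $\{f(v_1),f(v_2)\}=\{1,-1\}$, so this difference lies in $\{-1,0,1\}$. If $k\ge 2$, then every edge of $T$ has at least one $\star$ endpoint, so every term is $0$. In all cases $|\int_T df|\le 1$.

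For the $f$-contractible case I would revisit the same three subcases, now using the extra holomorphic condition that no edge of $T$ joins a $0$ to a $\star$. The cases $k=0$ and $k\ge 2$ already gave $\int_T df=0$ above. For $k=1$, holomorphicity forces $f(v_1),f(v_2)\in\{-1,1,\star\}$ (they are adjacent to $v_0=\star$, hence not $0$); if either is $\star$ then $df$ vanishes on the edge $v_1v_2$ too, and otherwise $f(v_1),f(v_2)\in\{-1,1\}$ with the pair $\{1,-1\}$ forbidden by continuity, so $f(v_1)=f(v_2)$ and the single surviving term is $0$.

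I do not expect any real obstacle; the proof is a direct case analysis, and the main thing to be careful about is the bookkeeping that converts the abstract expression $\eps_i(e_i)\,df(e_i)$ into the telescoping form $f(v_i)-f(v_{i-1})$ (and the $\star$-induced vanishing) uniformly in the orientation of each $e_i$.
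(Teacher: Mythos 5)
Your proof is correct and follows essentially the same route as the paper: both split on the number of $\star$-valued vertices of $T$ (zero, one, at least two), use the telescoping/Lemma~\ref{lemma-entire} argument when $f$ is entire on $T$, observe that $df$ vanishes on all three edges when two or more vertices map to $\star$, and in the remaining case reduce $\int_T df$ to a single difference $f(v_i)-f(v_j)$ across the edge opposite the $\star$-vertex, bounded by continuity and forced to zero by holomorphicity. The only cosmetic difference is that you spell out the $\eps_i(e_i)\,df(e_i)$ bookkeeping directly rather than citing Lemma~\ref{lemma-entire} for the entire case, and your remark ``if either is $\star$'' in the $k=1$ contractible subcase is vacuous (the hypothesis $k=1$ already rules it out), but neither affects correctness.
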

\begin{proof}
Let $V(T)=\{u,v,w\}$.  If $f$ is entire on $V(T)$, then the claim follows from Lemma~\ref{lemma-entire}.  Hence, we can assume that
$f(w)=\star$.  If $f(u)=\star$ or $f(v)=\star$, then $df(e)=0$ for every $e\in E(T)$ and $\int_T df=0$.

Otherwise, $|\int_T df|=|f(u)-f(v)|$.  Since $f$ is continuous, we have $|f(u)-f(v)|\le 1$.  Furthermore, if $T$ is $f$-contractible,
then $f(u)\neq 0\neq f(v)$, and thus $f(u)=f(v)$ and $\int_T df=0$.
\end{proof}

Let $f \in C^0(G,\mc{L})$ be continuous. From Lemma~\ref{lem:triangle} it
follows that $\int_W df=0$ for every $W$ that is a sum of $f$-contractible
triangles.  We need a slight generalization of this fact. We will say
that a closed directed walk $W$ on $G$ is \emph{$(f,k)$-almost contractible}
if there exist triangles $T_1,T_2,\ldots,T_n$ of $G$ such that
$I_W=\sum_{i=1}^n I_{T_i}$  and $T_{k+1},T_{k+2},\ldots,T_n$ are $f$-contractible.  

Let $W_i$ be a walk from $u_i$ to $v_i$ on $G$ for $i=1,2$.
We say that $W_1$ and $W_2$ are \emph{$(f,k)$-almost homotopic} if there exists a walk $Q$ from $u_1$ to $u_2$ and a walk $R$ from $v_1$ to $v_2$ such that 
the walk $QW_2R^{-1}W_1^{-1}$ is $(f,k)$-almost contractible, and furthermore $f$ is constant and 
integer on each of $V(Q)$ and $V(R)$. 
We say that $W_1$ and $W_2$ are \emph{$f$-homotopic} if they are  $(f,0)$-almost homotopic. From Lemma~\ref{lem:triangle} we deduce the following corollary. 

\begin{lemma}\label{lem:homotopic}
Let $f \in C^0(G,\mc{L})$ be continuous and let $W_1$ and $W_2$ be walks on $G$.
If $W_1$ and $W_2$ are $(f,k)$-almost homotopic then $$\left|\int_{W_1} df-\int_{W_2}df\right| \leq k.$$
In particular, if $W_1$ and $W_2$ are $f$-homotopic then $\int_{W_1} df =\int_{W_2}df$.
\end{lemma}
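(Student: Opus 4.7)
The plan is to peel back the definitions and reduce everything to Lemma~\ref{lem:triangle} via linearity of $I_{\cdot}$. Let $W = QW_2R^{-1}W_1^{-1}$ be the closed walk witnessing the $(f,k)$-almost homotopy of $W_1$ and $W_2$. The first step is to record that $I_{\cdot}$ is additive under concatenation of walks and satisfies $I_{U^{-1}} = -I_U$, so that applying $\int_{\cdot}df$ yields
\[ \int_W df \;=\; \int_Q df + \int_{W_2} df - \int_R df - \int_{W_1} df. \]

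The second step is to argue that the two ``glue'' contributions vanish. Since $f$ is required to be constant and integer-valued on $V(Q)$, no vertex of $Q$ carries the label $\star$, and for every edge $e$ traversed by $Q$ we get $df(e) = f(e^+) - f(e^-) = 0$. Because $I_Q$ is supported on the edges of $Q$, the sum $\int_Q df = \sum_{e} I_Q(e)\,df(e)$ collapses to zero; the same reasoning yields $\int_R df = 0$. The display above therefore simplifies to $\int_W df = \int_{W_2} df - \int_{W_1} df$.

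The final step is to bound $\int_W df$ using the triangle decomposition. By definition of $(f,k)$-almost contractibility, there exist triangles $T_1,\ldots,T_n$ of $G$ such that $I_W = \sum_{i=1}^{n} I_{T_i}$ and $T_{k+1},\ldots,T_n$ are $f$-contractible. Linearity of $\int_{\cdot} df$ in its first argument gives $\int_W df = \sum_{i=1}^{n} \int_{T_i} df$, and Lemma~\ref{lem:triangle} contributes $0$ for each $i > k$ and at most $1$ in absolute value for each $i \le k$ (using continuity of $f$). Hence $\bigl|\int_W df\bigr| \le k$, which combined with the previous paragraph gives the desired inequality. Setting $k = 0$ yields the ``in particular'' statement.

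I do not foresee a real obstacle; the argument is essentially bookkeeping. The only point worth keeping an eye on is that the ``constant and integer'' clause in the definition of almost-homotopy is what keeps $\star$ off of $V(Q) \cup V(R)$ and hence is exactly what makes the glue-piece integrals vanish cleanly.
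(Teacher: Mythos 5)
Your proof is correct and follows essentially the same route as the paper: decompose $\int_W df$ over the four pieces of $W=QW_2R^{-1}W_1^{-1}$, kill the $Q$ and $R$ contributions using the constancy of $f$, and bound the remainder via the triangle decomposition and Lemma~\ref{lem:triangle}. The only cosmetic difference is that you argue $\int_Q df=\int_R df=0$ directly from $df\equiv 0$ on the edges of $Q$ and $R$, while the paper cites Lemma~\ref{lemma-entire} for the same conclusion.
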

\begin{proof}
Let $Q$ and $R$ be the walks and $T_1,T_2,\ldots,T_n$ be the triangles showing that  $W_1$ and $W_2$ are $(f,k)$-almost homotopic.
Let $W=QW_2R^{-1}W_1^{-1}$.  By Lemma~\ref{lem:triangle}, we have
\begin{align*}
\left|\int_W df\right|&=\left|\sum_{e\in E(G)} I_W(e)df(e)\right|=\left|\sum_{e\in E(G)}\sum_{i=1}^n I_{T_i}(e)df(e)\right|\\
&=\left|\sum_{i=1}^n\int_{T_i} df\right|=\left|\sum_{i=1}^k\int_{T_i} df\right|\le k.
\end{align*}
On the other hand, since $f$ is constant on $Q$ and $R$, Lemma~\ref{lemma-entire} implies that
\begin{align*}
\left|\int_W df\right|&=\left|\int_Q df+\int_{W_1} df-\int_R df-\int_{W_2} df\right|=\left|\int_{W_1} df-\int_{W_2} df\right|,
\end{align*}
and the claim of the lemma follows.
\end{proof}

For a function $h:Y\to \bb{R}$ and a set $X\subseteq Y$, let $h(X)=\sum_{x\in X} h(x)$.
Let $P=v_0v_1\ldots v_n$ be a path in a graph $G$.  For a function $f\in C^0(G,\mc{L})$ that is entire on $P$ and for $1\le i\le n-1$, let
$\lambda_{P,f}(v_i)=\frac{1}{2}(f(v_{i+1})-f(v_{i-1}))$.

\begin{lemma}\label{lemma-path}
Let $f\in C^0(G,\mc{L})$ be a function that is entire on a path $P=v_0v_1\ldots v_n$ in a graph $G$, such that $f(v_0)\neq 0$ and $f(v_n)\neq 0$.
Suppose that $g\in C^0(G,\mc{L})$ satisfies $g(v)=f(v)$ for every $v\in V(P)$ such that $g(v)\neq \star$, and $f(v)=0$ for every  $v\in V(P)$ such that $g(v)=\star$.
Let $X=\{v\in V(P):g(v)=0\}$.  Then
$$\frac{1}{2}\int_P dg=\lambda_{P,f}(X).$$  Furthermore, if $g$ is holomorphic on $P$, then $\lambda_{P,f}(X)$ is an integer.
\end{lemma}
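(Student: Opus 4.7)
The plan is to evaluate both sides via a two-level telescoping argument and then match them. Since $f$ is entire on $P$, Lemma~\ref{lemma-entire} applied to $f$ yields $\int_P df = f(v_n) - f(v_0)$. By hypothesis, $g$ and $f$ agree on $V(P)$ wherever $g$ is not $\star$, and the assumption $f(v_0), f(v_n) \neq 0$ forces $v_0$ and $v_n$ not to be stars. On an edge of $P$ with no starred endpoint we thus have $dg = df$, while every other edge of $P$ sits inside a maximal star run $v_j, v_{j+1}, \ldots, v_k$ with $1 \le j \le k \le n-1$, on which $dg$ vanishes identically. The edges $v_{j-1}v_j, v_jv_{j+1}, \ldots, v_k v_{k+1}$ contribute $f(v_{k+1}) - f(v_{j-1})$ to $\int_P df$ by telescoping, so erasing these contributions over all star runs gives
\[
\int_P dg \;=\; f(v_n) - f(v_0) \;-\; \sum_{\text{star runs}} \bigl(f(v_{k+1}) - f(v_{j-1})\bigr).
\]

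Next I would compute $2\lambda_{P,f}(X)$ by applying the same run-telescoping one level up. Let $Z = \{v_i \in V(P) : f(v_i) = 0\}$; since $g(v) = \star$ forces $f(v) = 0$, the set $Z \setminus X$ is exactly the set of stars on $P$. For any maximal run $v_j, \ldots, v_k$ of $f$-zeros the interior values $f(v_{j+1}), \ldots, f(v_{k-1})$ vanish, so
\[
\sum_{i=j}^k \bigl(f(v_{i+1}) - f(v_{i-1})\bigr) \;=\; f(v_{k+1}) - f(v_{j-1}).
\]
Between consecutive $f$-zero runs the value of $f$ is constant in $\{\pm 1\}$ by continuity, so summing the flanking differences over all maximal $f$-zero runs telescopes to $f(v_n) - f(v_0)$; applying the same calculation to maximal star runs (each of which sits inside an $f$-zero run) produces $\sum_{\text{star runs}} (f(v_{k+1}) - f(v_{j-1}))$. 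Subtracting the star-run total from the zero-run total gives
\[
2\lambda_{P,f}(X) \;=\; f(v_n) - f(v_0) \;-\; \sum_{\text{star runs}} \bigl(f(v_{k+1}) - f(v_{j-1})\bigr) \;=\; \int_P dg,
\]
which is the first claim.

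For the integrality statement, I would use holomorphism to force the flanking $f$-values of every star run into $\{\pm 1\}$. The endpoints satisfy $f(v_0), f(v_n) \in \{\pm 1\}$, so $f(v_n) - f(v_0)$ is even. For a maximal star run $v_j, \ldots, v_k$, maximality gives $g(v_{j-1}), g(v_{k+1}) \neq \star$, and holomorphism forbids a star from being adjacent to a $g$-zero; hence $g(v_{j-1}), g(v_{k+1}) \in \{\pm 1\}$ and $f(v_{k+1}) - f(v_{j-1})$ is even as well. Every summand in the expression for $2\lambda_{P,f}(X)$ is therefore even, so dividing by $2$ yields an integer. The only subtle point in the argument is the boundary bookkeeping that rules out stars at $v_0$ or $v_n$; this is exactly what the assumptions $f(v_0), f(v_n) \neq 0$ buy, and without them the telescoping would pick up spurious endpoint terms. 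No deeper obstacle is anticipated.
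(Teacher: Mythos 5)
Your proof is correct but organizes the telescoping differently from the paper's. The paper decomposes $P$ into the maximal subpaths $P_1,\ldots,P_k$ on which $g$ is entire (i.e., the maximal star-free stretches), applies Lemma~\ref{lemma-entire} to $g$ on each $P_j$ to get $\int_{P_j}dg=f(v_b)-f(v_a)$, and then telescopes $\lambda_{P,f}$ over the $g$-zeros inside each $P_j$ separately, matching terms subpath by subpath; the integrality argument then reads off $f(v_a),f(v_b)\in\{\pm1\}$ from holomorphicity at the subpath boundaries. You instead work globally with the \emph{complementary} decomposition: you start from $\int_Pdf=f(v_n)-f(v_0)$, subtract off the edge contributions killed by star runs, and on the other side pass through the auxiliary quantity $\lambda_{P,f}(Z)$ (where $Z$ is the set of $f$-zeros, computed via a second telescoping over $f$-zero runs and the constancy of $f$ between them) before subtracting the star-run contributions $\lambda_{P,f}(Z\setminus X)$. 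Both routes reduce to the same boundary terms $f(v_{k+1})-f(v_{j-1})$ around star runs and use holomorphicity in the same way for integrality; the paper's per-subpath version is slightly more compact since it invokes Lemma~\ref{lemma-entire} for $g$ directly rather than correcting $\int_Pdf$, while your version makes the identity $\int_Pdg=2\lambda_{P,f}(X)$ visibly a cancellation against the entire case, at the cost of the extra intermediate set $Z$. Your handling of the endpoint issue (the hypothesis $f(v_0),f(v_n)\neq0$ ruling out boundary stars) and the integrality via parity of the flanking differences are both sound.
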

\begin{proof}
Let $P_1$, \ldots, $P_k$ be the maximal subpaths of $P$ such that $g$ is entire on $P_i$ for $i=1,\ldots,k$.
Note that $$\int_P dg=\sum_{j=1}^k \int_{P_j} dg,$$
and thus it suffices to prove that $\frac{1}{2}\int_{P_j} dg=\lambda_{P,f}(X\cap V(P_j))$ for $1\le j\le k$.
Let $P_j=v_av_{a+1}\ldots v_b$.  By Lemma~\ref{lemma-entire}, we have $\frac{1}{2}\int_{P_j} dg=\frac{1}{2}(g(v_b)-g(v_a))=\frac{1}{2}(f(v_b)-f(v_a))$.
Let $V(P_j)\cap X=\{v_{c_1},v_{c_2},\ldots, v_{c_p}\}$, where $c_1<c_2<\ldots<c_p$.
Since $f$ is continuous on $P$, for $1\le i\le p-1$ we have $f(v_{c_i+1})=f(v_{c_{i+1}-1})$.
Furthermore, $f(v_{c_1-1})=f(v_a)$ and $f(v_{c_p+1})=f(v_b)$; this is the case even if say $c_1=a$, since then
$g(v_{a-1})=\star$ and $f(v_{a-1})=f(v_a)=0$.
Therefore, $\lambda_{P,f}(X\cap V(P_j))=\frac{1}{2}\sum_{i=1}^p f(v_{c_i+1})-f(v_{c_i-1})=\frac{1}{2}(f(v_b)-f(v_a))=\frac{1}{2}\int_{P_j}dg$, as required.
Moreover, if $g$ is holomorphic on $P$, then $f(v_a)=\pm 1$, since either $a=0$ or $g(v_{a-1})=\star$, and similarly $f(v_b)=\pm 1$.
Hence, $\int_{P_j} dg\in\{-2,0,2\}$ and $\lambda_{P,f}(X\cap V(P_j))=\frac{1}{2} \int_{P_j} dg$ is an integer.
\end{proof}

\section{Treewidth of slab separators}

To give a lower bound on tree-width, we use the following claim (which is well-known, although usually stated with
only non-negative weights).

\begin{lemma}\label{lem:weights}
Let $t\ge 0$ be an integer.
Let $H$ be a graph with $\brm{tw}(H) \leq t$.  Let $\lambda:V(H) \to \bb{R}$ be such that $|\lambda(v)|\le 1$ for every $v\in V(H)$.
If $\lambda(V(H)) \geq 3t+3$, then there exists
a separation $(K,L)$ of $H$ such that $$\frac{1}{3}\lambda(V(H)) \leq \lambda(K\setminus L) \leq \frac{2}{3}\lambda(V(H))$$ and $|K \cap L| \leq t+1$.
\end{lemma}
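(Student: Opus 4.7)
Fix a tree decomposition $(T,\beta)$ of $H$ of width at most $t$, rooted at a node $r$, and for each $v\in V(T)$ set $\Psi(v)\colonequals \lambda(D_v)$, where $D_v$ is the set of vertices of $H$ whose occurrence subtree in $T$ lies inside the subtree of $T$ rooted at $v$. Then $\Psi(r)=W$, and whenever $v\ne r$ the pair $K\colonequals \bigcup_{u\in \brm{subtree}(v)}\beta(u)$, $L\colonequals V(H)\setminus D_v$ is a separation of $H$ with $K\setminus L=D_v$, $K\cap L\subseteq \beta(v)$, so $|K\cap L|\le t+1$ and $\lambda(K\setminus L)=\Psi(v)$. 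Hence if some $v\ne r$ has $\Psi(v)\in[W/3,2W/3]$, we are already done.

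So assume every $v\in V(T)$ satisfies $\Psi(v)<W/3$ or $\Psi(v)>2W/3$. Since $\Psi(r)=W>2W/3$ and the tree is finite, pick a \emph{deepest} node $v^*$ with $\Psi(v^*)>2W/3$, so every child $v_i$ of $v^*$ has $\Psi(v_i)<W/3$. Let $Y$ be the set of vertices of $H$ whose topmost occurrence in $T$ is $v^*$; then $Y\subseteq\beta(v^*)$, and since $|\lambda|\le 1$ we have $|\lambda(Y)|\le |\beta(v^*)|\le t+1\le W/3$, where the last inequality uses $W\ge 3t+3$. The decomposition $D_{v^*}=Y\sqcup \bigsqcup_i D_{v_i}$ then gives
\[\sum_{i}\Psi(v_i)\;=\;\Psi(v^*)-\lambda(Y)\;>\;\tfrac{2W}{3}-\tfrac{W}{3}\;=\;\tfrac{W}{3},\]
and dropping the non-positive $\Psi(v_i)$'s only increases this sum, so the positive values among $\Psi(v_1),\dots,\Psi(v_k)$ still sum to more than $W/3$.

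List these positive children as $v_{i_1},\dots,v_{i_m}$ and let $S_j\colonequals \sum_{l\le j}\Psi(v_{i_l})$. Then $S_0=0$, $S_m>W/3$, and each jump $S_j-S_{j-1}=\Psi(v_{i_j})$ lies in $(0,W/3)$, so the smallest $j^*$ with $S_{j^*}\ge W/3$ satisfies $S_{j^*}<W/3+W/3=2W/3$. Setting $K\setminus L\colonequals \bigsqcup_{j\le j^*}D_{v_{i_j}}$ and using $\beta(v^*)$ as the separator (i.e.\ $K\colonequals(K\setminus L)\cup\beta(v^*)$ and $L\colonequals V(H)\setminus(K\setminus L)$), the same tree-decomposition bookkeeping as in the first paragraph shows that $(K,L)$ is a valid separation with $K\cap L\subseteq\beta(v^*)$ (hence $|K\cap L|\le t+1$) and $\lambda(K\setminus L)=S_{j^*}\in[W/3,2W/3)$, completing the proof.

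The only real subtlety is the signedness of $\lambda$: the classical balanced-separator argument for bounded tree-width uses non-negativity in a crucial way, and the main obstacle here is children whose $\Psi$-value is very negative. The resolution is to restrict the greedy subset-sum step to the positive $\Psi(v_i)$'s: discarding non-positive contributions preserves the lower bound $\sum>W/3$, and the ``deepest heavy'' choice of $v^*$ confines each positive $\Psi(v_i)$ to $(0,W/3)$, so a single greedy step can never overshoot the target interval $[W/3,2W/3]$ of width $W/3$.
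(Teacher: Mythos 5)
Your proof is correct and follows essentially the same strategy as the paper: locate a ``centroid'' node of the tree decomposition whose incident branches all carry controlled weight, then greedily aggregate branches until the accumulated weight lands in the target interval. The paper works with an unrooted decomposition (finding the centroid via a no-fixed-point-free-map argument on the tree) and handles signed weights by sorting branch weights in decreasing order, whereas you root the decomposition, take the deepest node whose subtree is heavy, and restrict the greedy step to the positive-weight children; both devices serve the same purpose and lead to the same bound.
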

\begin{proof}
Let $(T,\beta)$ be a tree decomposition of $H$ with bags of size at most $t+1$.
For an edge $uv$ of $T$, let $T_{u,v}$ denote
the component of $T-uv$ containing $v$, and let $S_{u,v}=\bigcup_{w\in V(T_{u,v})} \beta(w)\setminus \beta(u)$.

We first show that there exists $u\in V(T)$ such that every neighbor $v$ of $u$ in $T$ satisfies
$\lambda(S_{u,v})\le \frac{2}{3}\lambda(V(H))$.  If not, then for each $u\in V(T)$, let
$\pi(u)$ denote a neighbor of $u$ in $T$ such $\lambda(S_{u,\pi(u)})>\frac{2}{3}\lambda(V(H))$.
Since $T$ is a tree, there exists an edge $uv\in E(T)$ such that $\pi(u)=v$ and $\pi(v)=u$.
However, then
$$\lambda(V(H))=\lambda(S_{u,v})+\lambda(S_{v,u})+\lambda(\beta(u)\cap \beta(v))>\frac{4}{3}\lambda(V(H))-(t+1),$$
which contradicts the assumption $\lambda(V(H)) \geq 3t+3$.

Let $u\in V(T)$ be a vertex such that every neighbor $v$ of $T$ satisfies
$\lambda(S_{u,v})\le \frac{2}{3}\lambda(V(H))$.  Let $v_1$, $v_2$, \ldots, $v_m$ be the neighbors of $u$ in $T$
ordered so that $\lambda(S_{u,v_1})\ge \lambda(S_{u,v_2})\ge\ldots\ge \lambda(S_{u,v_m})$.
Note that $\sum_{i=1}^m S_{u,v_i}=\lambda(V(H))-\lambda(\beta(u))\ge \lambda(V(H))-(t+1)\ge\frac{1}{3}\lambda(V(H))$.
Let $m'$ be the smallest index such that $$\sum_{i=1}^{m'} \lambda(S_{u,v_i})\ge \frac{1}{3}\lambda(V(H)).$$
Let $K=\beta(u)\cup \bigcup_{i=1}^{m'} S_{u,v_i}$ and $L=\beta(u)\cup \bigcup_{i=m'+1}^{m} S_{u,v_i}$.
By the choice of $m'$, we have $\lambda(K\setminus L)\ge \frac{1}{3}\lambda(V(H))$.  If $m'=1$, then
$\lambda(K\setminus L)=\lambda(S_{u,v_1})\le \frac{2}{3}\lambda(V(H))$.
Hence, we can assume that $m'\ge 2$, and thus $\lambda(S_{u,v_i})<\frac{1}{3}\lambda(V(H))$ for $1\le i\le m$.
By the choice of $m'$, we have $\bigcup_{i=1}^{m'-1} \lambda(S_{u,v_i})<\frac{1}{3}\lambda(V(H))$,
and thus $\lambda(K\setminus L)<\frac{1}{3}\lambda(V(H))+\lambda(S_{u,v_{m'}})<\frac{2}{3}\lambda(V(H))$.
\end{proof}

We are now ready to prove our first main result.

\begin{proof}[Proof of Theorem~\ref{thm-main1}.]
Let $R_1$, \ldots, $R_n$ and $C_1$, \ldots, $C_n$ be rows and columns of $G$ of maximum degree at most $\Delta$.
For $1\le i,j\le n$, let $G\uparrow(i,j)$ denote the path $R_i\cap C_j$, directed from $S_1$ to $S_2$.

Let $(A,B)$ be a separation of $G$ such that $V(S_1)\subseteq A\setminus B$, $V(S_2)\subseteq B\setminus A$ and $X=A\cap B$.
Let $f:V(G) \to \mc{L}$ be defined by 
$$f(v)=\begin{cases} -1, & \text{if } v \in A\setminus B, \\
 0, &  \text{if } v \in X,\\
 1, &  \text{if } v \in B \setminus A.
\end{cases}$$
Note that $f$ is entire.  By Lemma~\ref{lemma-entire} we have
\begin{equation}\label{eq:1}
\int_{G\uparrow(i,j)}df=2.
\end{equation}
for $1\le i,j\le n$.

For every $v\in V(G\uparrow(i,j))\cap X$, let $\lambda(v)=\lambda_{G\uparrow(i,j),f}(v_i)$.
For every $v\in X\setminus\bigcup_{1\le i,j\le n} V(G\uparrow(i,j))$, let $\lambda(v)=0$.
By Lemma~\ref{lemma-path},
\begin{equation}\label{eq:2}
\int_{G\uparrow(i,j)}df=2\lambda(V(G \uparrow (i,j)) \cap X)
\end{equation}
It follows from (\ref{eq:1}) and (\ref{eq:2}) that $\lambda(X)= n^2$.

Let $H\colonequals G[X]$ and let $t\colonequals \brm{tw}(H)$.  We aim to prove that $t\ge \frac{n}{\sqrt{3\Delta}}-1$.
If $t>n^2/3-1$ or $t\ge n-1$, then this claim holds, since $\Delta\ge 3$.  Hence, we can assume that $n^2\ge 3t+3$ and $t<n-1$.
By Lemma~\ref{lem:weights} there exists a separation $(K,L)$ of $H$ such that 
\begin{equation}\label{eq:2.5}
\frac{n^2}{3} \leq \lambda(K\setminus L) \leq \frac{2n^2}{3}
\end{equation}
and $|K \cap L| \leq t+1<n$.
Let $R\colonequals \{i \in [n] : V(R_i)\cap (K\cap L)=\emptyset\}$ and
$C\colonequals \{j \in [n] : V(C_j)\cap (K\cap L)=\emptyset\}$, be the sets of indices of rows and, respectively, columns of $(G,S_1,S_2)$ disjoint from $K\cap L$.
Note that these sets are non-empty.
Let $S\colonequals (R \times [n]) \cup ([n] \times C)$.

Let $g \in C^0(G,\mc{L})$ be 
defined by $g(v)=f(v)$ for $v \in V(G)\setminus L$ and $g(v)=\star$ for $v \in L$. Note that $g$ is continuous and holomorphic on $V(G) - (K\cap L)$.
Let $h(p)\colonequals \frac{1}{2}\int_{G\uparrow p}dg$ for $p \in [n]^2$.
By Lemma~\ref{lemma-path}, we have
\begin{equation}\label{eq:4}
h(p) = \lambda(V(G \uparrow p) \cap (K\setminus L)),
\end{equation}
for every $p\in [n]^2$, and $h(p)$ is an integer for every $p\in [n]^2$ such that $V(G \uparrow p)\cap (K\cap L)=\emptyset$.
Note that for $i_1\in R$, $i_2\in C$ and $1\le j_1,j_2\le n$, the paths $G \uparrow(i_1,j_1)$ and $G \uparrow(i_1,j_2)$ are $g$-homotopic,
and the paths $G \uparrow(j_1,i_2)$ and $G \uparrow(j_2,i_2)$ are $g$-homotopic, since $R_{i_1}$ and $C_{i_2}$ are near-triangulations and $g$
is holomorphic on them.
It follows that $G \uparrow p_1$ and $G \uparrow p_2$ are $g$-homotopic for all $p_1,p_2 \in S$. Thus $h$ is constant on $S$ by Lemma~\ref{lem:homotopic}.
Let $H\in\bb{Z}$ be its value. 

Note that $|[n]\setminus C|\le t+1$.
Let us fix an element $c\in C$ and consider any $i\in [n]$.
Let $t_i=|V(R_i)\cap (K\cap L)|$.  For any $j\in [n]\setminus C$, the path $G \uparrow (i,j)$ is $(g,\Delta t_i)$-almost homotopic to the path $G \uparrow (i,c)$,
and by Lemma~\ref{lem:homotopic} we have $|h(i,j) - H|  \leq \Delta t_i$.  Thus

\begin{equation}\label{eq:5}
\left|\left(\sum_{j \in [n]} h(i,j)\right) - Hn\right|\le \sum_{j \in [n]} |h(i,j) - H| \leq \Delta t_i(t+1).
\end{equation}
Summing (\ref{eq:5}) over $i \in [n]$ and applying (\ref{eq:4}), we obtain
$$|\lambda(K\setminus L) - Hn^2| = \left|\sum_{p \in [n]^2} h(p) - Hn^2\right| \leq \Delta (t+1)^2.$$
However, by (\ref{eq:2.5}), $\lambda(K\setminus L)$ differs from an integer multiple of $n^2$ by at least $n^2/3$.
It follows that $t \geq \frac{n}{\sqrt{3\Delta}}-1$, as desired.
\end{proof}

\section{Partitions of a cube}\label{sec-main2}

Consider the $N\times N\times N$ grid $Q_N$ with non-decreasing diagonals.  A path in $Q_N$ with vertices
$v_1=(x_1,y_1,z_1),v_2=(x_2,y_2,z_2),\ldots,v_k=(x_k,y_k,z_k)$ in order
is \emph{a staircase from $v_1$ to $v_k$} if $x_i+1 = x_{i+1}$, $y_i\le y_{i+1}$ and $z_i\le z_{i+1}$ for $1\le i\le k-1$.
The \emph{$b$-square $\square(v,b)$} around a vertex $v=(x,y,z)\in V(Q_N)$ is defined by $$\square(v,B) = \{(x,y+d_y,z+d_z):0\le d_y,d_z\le b\}.$$
The \emph{$b$-enlargement} $B(b,P)$ of a staircase $P$ is the subgraph of $Q_N$ induced by
$\bigcup_{v \in V(S)}\square(v,b)$.  The \emph{left side} of the $b$-enlargement of a staircase from $u$ to $v$ is $\square(u,b)$
and the \emph{right side} is $\square(v,b)$.  For each edge $yz$ of the $b$-enlargement, let $W_{yz}$ be the closed walk consisting
of the path parallel to the staircase from the left side to $y$, the edge $yz$, the path parallel to the staircase from $z$ to the left side,
and possibly an edge in the left side.

We need the following property (another proof of a similar statement is implicit in~\cite{matopriv}, Proposition 3.1).

\begin{lemma}\label{lemma-connect}
Let $b \ge 0$ be an integer, and let $G$ be the $b$-enlargement of a staircase in $Q_N$. 
If $X\subseteq V(G)$ is a minimal set that intersects
every path from the left side of $G$ to the right side, then $G[X]$ is connected.
\end{lemma}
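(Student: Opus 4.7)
The plan is to proceed by contradiction, assuming $G[X]$ is disconnected, and to derive a contradiction using the homotopy tools developed in the previous section. Partition $X = Y_1 \cup Y_2$ with both parts nonempty and no edge of $G$ between them. Take the separation $(A,B)$ of $G$ with $A \cap B = X$, $V(S_1) \subseteq A$ and $V(S_2) \subseteq B$ arising from the components of $G - X$, where $S_1, S_2$ denote the left and right sides, and define $g \in C^0(G,\mc{L})$ by $g = -1$ on $A \setminus B$, $g = +1$ on $B \setminus A$, $g = 0$ on $Y_1$, and $g = \star$ on $Y_2$. Then $g$ is holomorphic (since $(A,B)$ is a separation and $Y_1, Y_2$ are non-adjacent in $G$), and in particular every triangle of $G$ is $g$-contractible.

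By minimality of $X$, for any $y_1 \in Y_1$ the set $X \setminus \{y_1\}$ fails to separate the left and right sides, so there is a path $P_1$ from $S_1$ to $S_2$ that passes through $y_1$ and uses no other vertex of $X$; in particular $P_1$ avoids $Y_2$. Symmetrically, one obtains a path $P_2$ from $S_1$ to $S_2$ that meets $Y_2$ but avoids $Y_1$. On $P_1$ the function $g$ is entire and changes from $-1$ to $+1$, so Lemma~\ref{lemma-entire} yields $\int_{P_1} dg = 2$. On $P_2$, each edge either lies inside one of the regions $\{g = -1\}$, $\{g = +1\}$, $\{g = \star\}$ or has an endpoint with $g$-value $\star$, and in every case $dg = 0$ on the edge, so $\int_{P_2} dg = 0$.

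The key step is to show that $P_1$ and $P_2$ are $g$-homotopic, for then Lemma~\ref{lem:homotopic} forces the two integrals above to agree, a contradiction. Choosing walks $Q$ inside the left side and $R$ inside the right side to connect the corresponding endpoints of $P_1$ and $P_2$ (both $g$-constant with integer value, by construction of $g$), the task reduces to verifying that the closed walk $W = Q P_2 R^{-1} P_1^{-1}$ is $(g,0)$-almost contractible. Because $g$ is holomorphic, every triangle is $g$-contractible, so this is equivalent to the purely topological claim that $I_W = \sum_i I_{T_i}$ for some triangles $T_i$ of $G$.

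This topological claim is the main obstacle, and this is where I expect to use the walks $W_{yz}$ defined just before the statement of the lemma. For each edge $yz$ of $G$, the walk $W_{yz}$ bounds a thin ribbon-shaped portion of the $b$-enlargement that can be triangulated explicitly, so that $I_{W_{yz}}$ is expressible as a sum of triangle cycles. Any closed walk in $G$ is then, after rewriting edge by edge via the associated $W_{yz}$'s, equal modulo triangle cycles to a closed walk lying entirely inside the left square $\square(v_1,b)$; since $\square(v_1,b)$ is a triangulated $(b+1)\times(b+1)$ grid, every closed walk in it is trivially a sum of its constituent triangles. The technical heart of the argument is the case analysis producing the explicit triangulation of the ribbon bounded by $W_{yz}$, which must cover horizontal edges within a single $b$-square, edges between two consecutive $b$-squares, and the non-decreasing diagonal edges of $Q_N$.
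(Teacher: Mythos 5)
Your proposal is correct and takes essentially the same approach as the paper: you define a $\{0,\pm1,\star\}$-valued holomorphic function placing $\star$ on one part of $X$ and $0$ on the other, produce two paths through the two parts whose integrals $\int dg$ differ (2 versus 0), and derive a contradiction from $g$-homotopy via the $W_{yz}$ decomposition. The only cosmetic difference is that the paper fixes a single component $C$ (value $0$) and shows every $y\in X$ lies in it, while you argue by contradiction from a two-part non-adjacent partition; the paper also explicitly notes that one may assume $X$ is disjoint from $S_1\cup S_2$ by extending the staircase, a technicality you should add. Both you and the paper leave the triangulability of the $W_{yz}$ ribbons and of closed walks in the left square as an observation rather than a detailed case analysis, so you are at the same level of rigor on that point.
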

\begin{proof}
Let $S_1$ and $S_2$ be the left and the right side of $G$, respectively.
Since we can extend the staircase if necessary, we can assume that $X$ is disjoint from $S_1\cup S_2$.
Let $(A,B)$ be a separation of $G$ such that $S_1\subseteq A$, $S_2\subseteq B$ and $X=A\cap B$.
Let $l\in S_1$ and $r\in S_2$ be arbitrary vertices of the sides of $G$.

Consider any two vertices $x,y\in X$, and let $C$ be the vertex set of the component of $G[X]$ containing $x$.
Let $f:V(G)\to \mc{L}$ be defined by 
$$f(v)=\begin{cases} -1, & \text{if } v \in A\setminus B, \\
 0, &  \text{if } v \in C,\\
 \star, &  \text{if } v \in X\setminus C,\\
 1, &  \text{if } v \in B \setminus A.
\end{cases}$$
By the minimality of $X$, there exist paths $P_x$ and $P_y$ from $l$ to $r$ in $G$
such that $V(P_x)\cap X=\{x\}$ and $V(P_y)\cap X=\{y\}$.  Let $W$ be the closed walk consisting of $P_x$ and the reverse of $P_y$.
Note that $I_W=\sum_{i=1}^n I_{T_i}$ for some triangles $T_1$, \ldots, $T_n$ in $G$ (since $W$ is the sum of walks $W_{yz}$
for edges $yz$ of $W$ together with a closed walk in the left side of $G$, it suffices to observe that this claim holds for the walks $W_{yz}$ and for closed walks in the left side).

Since $f$ is holomorphic, $W$ is $(f,0)$-almost contractible, and thus $P_x$ and $P_y$ are $f$-homotopic.
By Lemma~\ref{lem:homotopic}, we have $\int_{P_x} df=\int_{P_y} df$.  Since $f$ is entire on $P_x$, we have
$\int_{P_x} df=2$, and thus $f(y)\neq\star$ (as otherwise we would have $\int_{P_y} df=0$).  Therefore,
$y$ is in the same component of $G[X]$ as $x$.

We conclude that $G[X]$ is connected.
\end{proof}

Let $A_1,A_2$ be a partition of vertices of $Q_N$.  Let $b\ge 0$ and $i\in\{1,2\}$ be integers, and  let $P$ be a staircase.
We say that $P$ is \emph{$(b,i)$-blocked} if every path in the $b$-enlargement of $P$ joining its left side $S_1$ with its right side $S_2$
intersects $A_i\setminus (S_1\cup S_2)$.

\begin{lemma}\label{lemma-permpair}
Let $A_1,A_2$ be a partition of vertices of $Q_N$.  Let $b\ge 0$ and $i\in\{1,2\}$ be integers.
Let $P$ be a $(b,i)$-blocked staircase, let $M$ be the $(b+1)$-enlargement of $P$, and
let $M_i$ be the subgraph of $M$ induced by $A_i\cap V(M)$.
There exists a connected component of $M_i$ containing all paths in $M_i$ which join the left side of $M$ with the right side.
\end{lemma}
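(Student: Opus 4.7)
The plan is to apply Lemma~\ref{lemma-connect} to the smaller $b$-enlargement $M'$ of $P$ (which is a subgraph of $M$) to obtain a connected $A_i$-wall, and then use a homotopy argument modeled on the proof of Lemma~\ref{lemma-connect} to transfer the conclusion to $M_i$. Write $L_M, R_M$ for the left and right sides of $M$, and $L_{M'}, R_{M'}$ for those of $M'$; by definition $V(L_{M'})\subseteq V(L_M)$ and $V(R_{M'})\subseteq V(R_M)$. The $(b,i)$-blockedness of $P$ says that $A_i\cap V(M')\setminus (V(L_{M'})\cup V(R_{M'}))$ meets every path in $M'$ from $V(L_{M'})$ to $V(R_{M'})$. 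I would choose $X$ to be a minimal such separator (possibly after extending $P$ by one step on each end, as in the proof of Lemma~\ref{lemma-connect}). Lemma~\ref{lemma-connect} applied to $M'$ then yields that $M'[X]$ is connected, so $X$ lies in a single connected component $K$ of $M_i$. It suffices to show that every path $Q$ in $M_i$ from $V(L_M)$ to $V(R_M)$ satisfies $V(Q)\cap V(K)\neq\emptyset$.

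Suppose for contradiction such a $Q$ exists with $V(Q)\cap V(K)=\emptyset$, so $V(Q)$ is contained in a different component of $M_i$. Let $(A,B)$ be the separation of $M'$ with $A\cap B=X$, $V(L_{M'})\subseteq A\setminus B$, and $V(R_{M'})\subseteq B\setminus A$. By minimality of $X$ there is a path $P^\ast$ in $M'$ from $V(L_{M'})$ to $V(R_{M'})$ meeting $X$ at exactly one vertex; with further refinement, $P^\ast$ can be chosen so that its remaining $A_i$-vertices all lie in $V(K)$ and its endpoints lie outside $A_i$. Define $f:V(M)\to\mc{L}$ by $f=0$ on $V(K)$, $f=\star$ on $V(M_i)\setminus V(K)$, and $f\in\{-1,+1\}$ on $V(M)\setminus V(M_i)$ according to the separation $(A,B)$ on $V(M')\setminus V(M_i)$, extended across the shell $V(M)\setminus V(M')$ by propagating the two sides inward from the boundary. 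Form the closed walk $W=\rho_L Q\rho_R^{-1}(P^\ast)^{-1}$ in $M$, where $\rho_L\subseteq L_M$ and $\rho_R\subseteq R_M$ join the endpoints of $Q$ and $P^\ast$. Expressing $I_W=\sum_j I_{T_j}$ as a sum of triangles $T_j$ in $M$ (via the walks $W_{yz}$ introduced before Lemma~\ref{lemma-connect}), and verifying that each $T_j$ is $f$-contractible, Lemma~\ref{lem:homotopic} gives $\int_Q df=\int_{P^\ast}df$. A direct calculation yields $\int_{P^\ast}df=2$ (since $f$ is entire on $P^\ast$, taking values $-1,0,+1$ only) while $\int_Q df=0$ (since $f\equiv\star$ on $V(Q)$, so $df$ vanishes on every incident edge), the desired contradiction.

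The main obstacle is the consistent definition of $f$ on the shell $V(M)\setminus V(M')$ so that $f$ is holomorphic on each triangle $T_j$. The separation of $M'$ does not directly extend to a separation of $M$, because the shell provides paths from $V(L_M)$ to $V(R_M)$ that bypass $M'$; thus the $\pm 1$ assignment on the shell must exploit the specific geometric structure (at each slice the shell is an $L$-shape attached to the $M'$-square along two edges) to prevent any forbidden $-1,+1$ adjacency between the two sides, and a parallel care is needed in refining the choice of $P^\ast$ so that $f$ is entire on it.
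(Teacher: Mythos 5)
Your first step---applying Lemma~\ref{lemma-connect} to the $b$-enlargement $M_0$ of $P$ to obtain a minimal separator $X\subseteq A_i$ with $M_0[X]$ connected---matches the paper's proof. But from there you set off on a homotopy-based argument that is left with genuine gaps, where the paper has a two-line observation. The paper defines a projection $\pi:V(M)\to V(M_0)$ by $\pi(u)=(x,\min(y,y_0+b),\min(z,z_0+b))$, where $(x,y_0,z_0)$ is the unique staircase vertex with the same $x$-coordinate as $u=(x,y,z)$. This $\pi$ fixes $V(M_0)$, maps every $u$ to a vertex adjacent or equal to $u$, and maps adjacent pairs to adjacent-or-equal pairs. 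Hence for any left-to-right path $P'$ in $M_i$, the image $\pi(P')$ is a connected subgraph of $M_0$ meeting both sides, so it meets $X$; picking $u\in V(P')$ with $\pi(u)\in X$, the edge $u\pi(u)$ lies in $M_i$ and connects $P'$ to $X$. No function $f$, no homotopy, no triangle decomposition.

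The gaps in your route are real, not routine: (i) you never actually construct the extension of $f$ to the shell $V(M)\setminus V(M')$, and you correctly flag that the separation of $M'$ does not extend to $M$ since the shell carries left-to-right paths bypassing $M'$---the appeal to the ``L-shape'' geometry is a hope, not an argument; (ii) the ``further refinement'' making all $A_i$-vertices of $P^\ast$ other than $x^\ast$ lie in $K$ is unjustified, since minimality of $X$ controls only the intersection with $X$ and says nothing about which component of $M_i$ the other $A_i$-vertices of a witness path belong to; (iii) you also need $f$ constant and integer on $\rho_L\subseteq L_M$ and $\rho_R\subseteq R_M$, yet the sides of $M$ can contain $A_i$-vertices distributed over several components of $M_i$, where $f\in\{0,\star\}$. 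The cleanest way to repair (i) would in fact be to set $f(u)\colonequals f(\pi(u))$ on the shell---at which point you have rediscovered the paper's projection and can drop the homotopy machinery entirely.
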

\begin{proof}
Let $M_0$ be the $b$-enlargement of $P$.
Let $X$ be a minimal subset of $A_i\cap V(M_0)$ such that every path in $M_0$ from the left side to the right side intersects $X$.
By Lemma~\ref{lemma-connect}, $M_0[X]$ is connected.

Let $P'$ be a path in $M_i$ joining the left side of $M$ with the right side.
For a vertex $u=(x,y,z) \in M$, let $v=(x,y_0,z_0) \in V(P)$ be the unique
vertex of $P$ such that $u \in \square(v,b+1)$. Let $\pi(u)$ be the point of
$\square(v,b)$ closest to $u$ in the Euclidean distance. That is, $\pi(u)=(x,\min(y,y_0+b),\min(z,z_0+b))$.
Clearly $u$ is adjacent to $\pi(u)$. Moreover, it is easy to
check that if $u$ and $u'$ are adjacent then $\pi(u)$ and $\pi(u')$ are
adjacent or equal. 

It follows that the subgraph $\pi(P')$ of $M_0$ induced by $\{\pi(u): u \in V(P')\}$
is connected and contains vertices both in the left and in the right side
of $M_0$. Therefore, $\pi(P')$ intersects $X$, and thus $P'$ belongs to the
same component of $M_i$ as $X$, as desired.
\end{proof}

The \emph{$t \times t$-grid} is the graph whose vertex set consists of all pairs $\{(x,y) \in \bb{Z}^2 : 0 \leq x,y \leq t-1\}$
and two vertices $(x_1,y_1)$ and $(x_2, y_2)$ are adjacent iff $|x_1-x_2|+|y_1-y_2|=1$.
Let $P_1$ and $P_2$ be two  staircases.
A staircase $P$ \emph{joins} $P_1$ with $P_2$ if $P_1$ is the initial segment of $P$ and $P_2$ is the final segment of $P$, or vice versa.

Consider the $N\times N\times N$ grid $Q_N$.  For an integer $n\ge 1$ and a point $v=(x,y,z) \in V(Q_N)$ with $x,y,z\le N-n$,
let $Q_n(v)$ denote the subgrid of $Q_N$ induced by
vertices $(x',y',z') \in V(Q_N)$ such that $x \leq x' \leq x + n-1, y \leq y' \leq y + n-1,$ and $z \leq z' \leq z + n-1$.
For a positive integer $d$, let $p_d(j,k)=(4dj+4dk,2dj+dk,dj+2dk)$.
This definition is motivated by the following fact.

\begin{observation}\label{lemma-obsjoin}
Let $n\ge 1$, $b\ge 0$ and $d\ge n+b$ be integers.  For every vertex $z=(j,k)$ of the $(2t+1)\times (2t+1)$ grid,
let $p_z=p_d(j,k)$ and let $P_z$ be a staircase in $Q_n(p_z)$.
For every edge $uz$ of the $(2t+1)\times (2t+1)$ grid, there exists a staircase $P_{uz}$ joining $P_u$ with $P_z$.
Furthermore, the staircases can be chosen so that for any edges $uz$ and $u'z'$ with $\{u,z\}\cap \{u',z'\}=\emptyset$,
the $b$-enlargements of $P_{uz}$ and $P_{u'z'}$ are vertex-disjoint.
\end{observation}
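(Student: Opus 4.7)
The plan is to take $P_{uz}$ to be the concatenation $P_uM_{uz}P_z$, where $M_{uz}$ is a canonical middle staircase from the last vertex $e_u$ of $P_u$ to the first vertex $s_z$ of $P_z$. Since $Q_n(p_u)$ and $Q_n(p_z)$ are separated in the $x$-coordinate by $4d-(n-1)>0$, the $x$-coordinate of $e_u$ is always strictly less than that of $s_z$. The displacement $s_z-e_u$ equals $(4d+\Delta_x,2d+\Delta_y,d+\Delta_z)$ for a type-$1$ edge (one with $z=u+(1,0)$ in grid coordinates) and $(4d+\Delta_x,d+\Delta_y,2d+\Delta_z)$ for a type-$2$ edge, with $|\Delta_{\bullet}|\le n-1$; since $d\ge n$ the $x$-component strictly dominates, so a staircase with this displacement exists. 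I take $M_{uz}$ to be the greedy one whose $y$- and $z$-coordinates at step $i$ are $\lfloor iL_y/L_x\rfloor$ and $\lfloor iL_z/L_x\rfloor$ above those of $e_u$. Each junction of the concatenation advances $x$ by exactly one, so $P_{uz}$ is itself a staircase joining $P_u$ with $P_z$.

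For disjointness, associate to each edge its layer $c\colonequals\min_{w\in\{u,z\}}(j_w+k_w)$. When $|c-c'|\ge 2$, the $x$-ranges of $P_{uz}\subseteq[4dc,4d(c+1)+n-1]$ and of $P_{u'z'}$ are disjoint (since $4d>n$), and $b$-enlargements preserve the $x$-coordinate. When $|c-c'|=1$, the $x$-ranges overlap only inside a slice of length $n$ adjacent to two grid-cubes in the same grid-layer, and the disjoint-endpoint hypothesis ensures these cubes are distinct lattice points $p_w,p_{w'}$ whose difference is a nonzero integer multiple of $(0,d,-d)$; with $d\ge n+b$ this separates their $b$-enlargements, and hence the portions of the two staircases lying in this slice. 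The critical case is $c=c'$: both middles traverse the full $x$-slab between the layer-$c$ and layer-$(c+1)$ cubes, and I argue that their $(y,z)$-positions at each $x$-slice differ by approximately $(da,-da)$ for some nonzero integer $a$ forced by the grid-offsets of the edges; since $d|a|\ge d\ge n+b$, this separation exceeds $b$ in at least one of the $y$- or $z$-coordinates once the bounded deviations are absorbed.

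The main obstacle is this same-layer case. Writing the $y$-difference of the two middle staircases at an $x$-slice $x=e_u^{\mathrm{x}}+i$ of the overlap in the form $da+(b_1-b_2)+(\lfloor iL_y^{(1)}/L_x^{(1)}\rfloor-\lfloor iL_y^{(2)}/L_x^{(2)}\rfloor)$, where $b_1,b_2$ are the $y$-offsets of $e_u,e_{u'}$ within their cubes, I need the two error terms to have total absolute value strictly less than $d|a|$. The first error is bounded by $n-1$; the second requires tracking how the slopes $L_y^{(i)}/L_x^{(i)}$ deviate from the reference value $1/2$ by $O(n/d)$ due to the bounded $\Delta$-offsets. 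A purely greedy construction may control this only to $O(n)$; to close the gap at $d=n+b$, I expect the proof to refine $M_{uz}$ by inserting short offset-absorbing segments at each end of the middle (advancing only in $x$, before and after a slope-matched canonical core), so that the residual error stays strictly below $d-b$. Then the $y$- and $z$-components together guarantee $|y_1(x)-y_2(x)|>b$ or $|z_1(x)-z_2(x)|>b$ at every $x$-slice in the overlap, giving vertex-disjointness of the $b$-enlargements.
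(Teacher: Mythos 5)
The paper provides no proof of this Observation, so there is nothing to compare against; the only question is whether your argument stands on its own, and it does not quite.

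Your overall framework — concatenating $P_u$, a middle staircase, and $P_z$, and partitioning pairs of edges by their layer difference $|c-c'|$ — is sensible, and the $|c-c'|\ge 2$ case is trivially correct. The $|c-c'|=1$ case also goes through, but the reason is not merely that the two relevant level-$(c+1)$ cubes are separated by $d|a''|$ in the $(0,d,-d)$ direction: you need to argue that in the overlap slice the two staircases actually stay trapped near those cubes. The clean way to see this is monotonicity: in the slice $x\in[4d(c+1),\,4d(c+1)+n-1]$, $P_{uz}$ still has to reach $s_z$ and so $z_1(x)\le s_z^{\mathrm z}\le p_z^{\mathrm z}+n-1$, while $P_{u'z'}$ has already passed $e_{u'}$ and so $z_2(x)\ge e_{u'}^{\mathrm z}\ge p_{u'}^{\mathrm z}$; since $p_{u'}^{\mathrm z}-p_z^{\mathrm z}=d|a''|\ge d\ge n+b$, the $z$-separation is at least $b+1$ regardless of how the middles are chosen. (If $a''<0$, the same argument works in $y$.) You gesture at this but do not carry it out.

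The real issue is the $c=c'$ case, which you yourself flag as unresolved. You note that the greedy floor-based middle may only control the error to $O(n)$, and then write "I expect the proof to refine $M_{uz}$ by inserting short offset-absorbing segments at each end of the middle (advancing only in $x$, \dots)." This is a hope, not an argument, and the proposed fix does not even make sense as stated: a segment that advances only in $x$ cannot absorb a $y$- or $z$-offset between $e_u$ and a reference point, since staircases are monotone in all three coordinates. The burden in the same-layer case is precisely to exhibit a concrete choice of $M_{uz}$ for which the slice-by-slice separation $y_1(x)-y_2(x)>b$ (or the symmetric $z$ statement) holds with only $d\ge n+b$, and your proposal does not supply one. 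Until that construction is pinned down and the inequality verified, the proof is incomplete.
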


In the following  proof of Theorem~\ref{thm-main2} we construct a subgraph of
$Q_n[A_1]$ or $Q_n[A_2]$ which closely resembles a subdivision of the $t \times t$ grid.
Unfortunately, obtaining an actual subdivision  seem to require a lot
more work, and therefore we content ourselves with obtaining the following less
structured certificate of large tree-width.  A collection of non-empty subsets $\mc{B}$
of the vertex set of a graph $G$ is called a \emph{bramble} if for all $B,B' \in \mc{B}$
the subgraph $G[B \cup B']$ of $G$ induced by $B \cup B'$ is
connected (and in particular, $G[B]$ is connected for every $B \in \mc{B}$). The
\emph{order} of $B$ is the minimum size of the set $S \subseteq V(G)$ such that
$S \cap B \neq \emptyset$ for every $B \in \mc{B}$. It is shown in~\cite{bramble}
that if $G$ contains a bramble of order $t$ then $G$ has tree-width at least
$t-1$.

We are now ready to complete the proof of Theorem~\ref{thm-main2}.

\begin{proof}[Proof of Theorem~\ref{thm-main2}]
We will prove by induction on $b$ that there exists $N=N(b)$ such that for
every partition $(A_1,A_2)$ of the vertex set of $Q_N$ and each $i\in \{1,2\}$, if the tree-width
of both $Q_N[A_1]$ and $Q_N[A_2]$ is less than $t$, then $Q_N$ contains a
$(b,i)$-blocked staircase.  Note that the theorem will
follow, by choosing $b\colonequals\lceil\sqrt{18}(t+1)\rceil-1$ and $n\colonequals N(b)$.
The $b$-enlargement $B$ of a staircase with sides $S_1$ and $S_2$
is a $(b+1,b+1)$-slab. As every path in $B$ from $S_1$ to $S_2$ intersects
$A_i\setminus (S_1 \cup S_2)$, Theorem~\ref{thm-main1} implies that the tree-width of
$Q_n[A_i]$ is at least $\frac{b+1}{\sqrt{18}}-1 \geq t$.

The base case $b=0$ is trivial with $N(0)=t+2$: we have $V(Q_t(1,1,1))\cap A_i\neq\emptyset$,
since $A_{3-i}$ induces a subgraph of tree-width less than $t$.  We move on to the induction step. Let
$n_0=N(b-1)$ and let $N=(8t+5)(n_0+b)$.
By the induction hypothesis, for every $v=(x,y,z)$ with $0\le x,y,z< N-n_0$, the subgrid $Q_{n_0}(v)$
contains a $(b-1,3-i)$-blocked staircase which we denote by $P_v$.

Let $d=n_0+b$, and for every vertex $z=(j,k)$ of the $(2t+1)\times (2t+1)$ grid,
let $p(j,k)=p_d(j,k)$.  Note that $Q_{n_0}(p(j,k))\subseteq Q_N$.
Let $P_z=P(p(j,k))$; by Lemma~\ref{lemma-permpair}, there exists a connected subgraph $M_z$
of the $b$-enlargement $B_z$ of $P_z$ such that $V(M_z)\subseteq A_{3-i}$ and $M_z$ contains all paths in $B_z[A_{3-i}]$ joining
the left side of $B_z$ with the right side.

For each edge $yz$ of the $(2t+1)\times (2t+1)$ grid, let $P_{yz}$ be the path as in Observation~\ref{lemma-obsjoin}.
If $P_{yz}$ is $(b,i)$-blocked then the proof of the induction step is finished.
Hence, we can assume that for every edge $yz$, there exists a path $R_{yz}$ with $V(R_{yz})\subseteq A_{3-i}$
in the $b$-enlargement of $P_{yz}$ joining the left side of this enlargement to the right side.
Note that $R_{yz}$ intersects $M_y$ and $M_z$.

For $0 \leq j \leq 2t$, let $S'_j$ be the path forming the $j$-th column of the $(2t+1)\times (2t+1)$ grid,
and let $$S_j=V\left(\bigcup_{y\in V(S'_j)} M_y\cup \bigcup_{yz\in E(S'_j)} R_{yz}\right).$$
Let the set $T_j$ be similarly defined for the $j$-th row of the grid.
As observed in the previous paragraph, $Q_N[S_j]$ and $Q_N[T_j]$ are
connected for all $0 \leq j\leq 2t$, and $M_{(j,k)} \subseteq S_j \cap T_k$.
Let $\mc{B}=\{S_j \cup T_j\}_{0 \leq j \leq 2t}$. Clearly $\mc{B}$ is a bramble
in $Q_N[A_{3-i}]$, and no vertex of $Q_N$ belongs to more than two elements of
$\mc{B}$ by Observation~\ref{lemma-obsjoin}. It follows that the order of
$\mc{B}$ is at least $t+1$, and thus $Q_N[A_{3-i}]$ has tree-width at least $t$,
yielding the desired contradiction.
\end{proof}

\noindent {\bf Acknowledgement.} This research was partially completed at a workshop held at the Bellairs Research Institute
in Barbados in April 2014. We thank the participants of the workshop, and especially Paul Seymour, for  helpful discussions.

\bibliographystyle{siam}
\bibliography{twd}

\end{document}